\documentclass[11pt]{article}

\usepackage[margin=1.3in]{geometry}
\usepackage{mathtools}
\usepackage[dvips]{graphics}
\usepackage{epsfig}
\usepackage{multirow}
\usepackage{array}
\usepackage{extarrows}
\usepackage{graphicx}
\mathtoolsset{showonlyrefs}

\usepackage[symbol]{footmisc}

\usepackage{comment}
\usepackage[titletoc,title]{appendix}
\numberwithin{equation}{section}
\usepackage{amsthm}
\usepackage{enumerate}
\usepackage{amsmath}
\newtheorem{proposition}{Proposition}

\usepackage{amssymb,amsmath}
\usepackage{latexsym}
\usepackage[usenames]{xcolor}
\usepackage{pgf}
\usepackage[numbers]{natbib}
\usepackage{hyperref}
\hypersetup{
    colorlinks,%
    citecolor=black,%
    filecolor=black,%
    linkcolor=black,%
    urlcolor=black
}
\usepackage{subfig}
\numberwithin{equation}{section}

\newtheorem{theorem}{ \noindent T{\footnotesize HEOREM}}

\newtheorem{lemma}{ \noindent L{\footnotesize EMMA}}[section]

\newtheorem{conjecture}{ \noindent C{\footnotesize ONJECTURE}}

\begin{document}

\title{A note on correlation inequalities for regular increasing families}
\author{Yiming Chen$^\ddagger$, Guozheng Dai$^*$}

\date{}
\maketitle

\begin{abstract}
This paper establishes quantitative correlation inequalities between monotone events and structured threshold objects in both the discrete cube and Gaussian space. We prove that for any increasing balanced family, there exists a linear threshold function yielding a covariance lower bound of $c \frac{\log n}{\sqrt{n}}$, and extend this principle to halfspaces in Gaussian space. These results verify the conjectures of Kalai, Keller, and Mossel regarding optimal correlation bounds for linear threshold functions and their Gaussian analogues.

\end{abstract}

\footnotetext[3]{School of Mathematical Sciences, Peking University, Beijing, China, ymchenmath@math.pku.edu.cn. }
\footnotetext[1]{Department of Mathematics, Hong Kong University of Science and Technology, Clear Water Bay, Kowloon, Hong Kong, guozhengdai@ust.hk.}

\bigskip

\section{Introduction}\label{sec:intro}

Correlation inequalities lie at the heart of combinatorics and probability theory, quantifying how much two families of sets, functions, or random variables tend to align. Beyond bare probability estimates, they control intersection patterns of set families, phase transitions in random graphs, and noise stability of monotone systems, making them indispensable for analysing discrete structures and monotonic random systems.

Let \(\Omega_n\) denote the discrete cube \(\{0,1\}^n\), whose \(2^n\) points are all binary strings of length \(n\).  
We identify each vector \(x = (x_1, \dots, x_n) \in \Omega_n\) with the subset  
\[ A=\{i \in [n] : x_i = 1\} \subseteq [n] = \{1, 2, \dots, n\};\]  
thus, the \(i\)-th coordinate \(x_i\) indicates whether element \(i\) is present in \(A\).
A family \(\mathcal{A} \subseteq \Omega_n\) is \emph{increasing} if for every \(S \in \mathcal{A}\) and every \(T \supseteq S\) (where \(S,T\) are subsets of \([n]\)) one has \(T \in \mathcal{A}\); equivalently, its indicator function \(\mathbf{1}_{\mathcal{A}} \colon \Omega_n \to \{0,1\}\) is non-decreasing with respect to the natural partial order on \(\Omega_n\).

A classical result of Harris \cite{H1960} asserts that any two increasing families $\mathcal{A},\mathcal{B}\subseteq\Omega_n$ are non-negatively correlated:
\[
\mathrm{Cov}_\mu(\mathbf{1}_\mathcal{A},\mathbf{1}_\mathcal{B})=\mu(\mathcal{A}\cap\mathcal{B})-\mu(\mathcal{A})\mu(\mathcal{B})\geq 0,
\]
where $\mu$ always denotes the uniform measure on $\Omega_n$ throughout the paper.

Talagrand \cite{T1996} gave a quantitative lower bound on this covariance in terms of the \emph{influences} of the individual coordinates on $\mathcal A$ and $\mathcal B$.
For a family $\mathcal A\subseteq\Omega_n$, the influence of coordinate $k$ under the uniform measure $\mu$ is defined as
\begin{align}\label{Eq_influence}
I_k(\mathcal A)=2\mu\!\bigl(\{x\in\mathcal A:x\oplus e_k\notin\mathcal A\}\bigr),
\end{align}
where $x\oplus e_k$ denotes the vector obtained from $x$ by flipping the $k$-th coordinate.  
The \emph{total influence} of $\mathcal{A}$ is
\[
I(\mathcal{A})=\sum_{k=1}^n I_k(\mathcal{A}).
\]
We also write
\[
\mathcal{W}_1(\mathcal{A},\mathcal{B})=\sum_{i=1}^n I_i(\mathcal{A})I_i(\mathcal{B}).
\]
Talagrand \cite{T1996} proved that, 
for increasing families $\mathcal{A},\mathcal{B}\subseteq\Omega_n$,  
\begin{align}\label{Eq_Talagrand_result}
\mathrm{Cov}_\mu(\mathbf{1}_\mathcal{A},\mathbf{1}_\mathcal{B})\geq c\,\frac{\displaystyle\mathcal{W}_1(\mathcal{A},\mathcal{B})}{\log\!\Bigl(e\Big/\displaystyle \mathcal{W}_1(\mathcal{A},\mathcal{B})\Bigr)},
\end{align} 
where $c$ is a positive universal constant.

Talagrand’s theorem has become a cornerstone of combinatorics and probability theory.  
Its applications include the analysis of noise sensitivity of Boolean functions \cite{BKS1999}, the study of sharp threshold phenomena in random graphs \cite{FKKK2018}, and the investigation of geometric influences in product spaces \cite{KK2013,T1997}.  
Explicit examples demonstrating the tightness of Talagrand’s bound can be found in \cite{KKM2016,T1996}. An alternative quantitative correlation inequality is given in \cite{KMS2014}. For simplicity, we do not discuss this latter result here.

A family \(\mathcal{A}\subset \Omega_n\) is \emph{regular} when all its influences coincide.  
For increasing and regular families \(\mathcal{A},\mathcal{B}\subset \Omega_n\), Kalai, Keller and Mossel \cite{KKM2016} prove 
\begin{align}\label{Eq_KKM_result}
\mathrm{Cov}_\mu(\mathbf{1}_\mathcal{A},\mathbf{1}_\mathcal{B})
\geq c\,\frac{\mathcal{W}_1(\mathcal{A},\mathcal{B})}{\sqrt{\log\frac{e}{\mathcal{W}_1(\mathcal{A},\mathcal{A})}}\sqrt{\log\frac{e}{\mathcal{W}_1(\mathcal{B},\mathcal{B})}}}
=c\,\frac{I(\mathcal{A})I(\mathcal{B})}{n\sqrt{\log\frac{en}{I(\mathcal{A})^2}}\sqrt{\log\frac{en}{I(\mathcal{B})^2}}},
\end{align}
where \(c>0\) is universal.   It is easy to show that \eqref{Eq_KKM_result} is always at least as strong as \eqref{Eq_Talagrand_result}, see  \cite[Claim~3.1]{KKM2016} for details.
At the same time, their argument could apply to more general families as well, and we refer the reader to \cite{KKM2016} for the full statements.

In certain special cases, the lower bound \eqref{Eq_KKM_result} markedly improves upon \eqref{Eq_Talagrand_result}.  Consider an increasing, regular and balanced (i.e. $\mu(\mathcal{A})=1/2$) family $\mathcal{A}\subset \Omega_n$ and  $\mathcal{B}=\{x\in \Omega_n: \sum_{i=1}^{n}x_i> n/2\}$.  Here Talagrand’s bound is $\Omega(1/\sqrt{n})$, whereas the KKM-type bound is $\Omega(\sqrt{\log n}/\sqrt{n})$, i.e., sharper by a factor of $\sqrt{\log n}$; details are given in Appendix \ref{apendix_supplement}.

This example is central: $\mathbf{1}_\mathcal{B}$ is the standard majority function, a cornerstone of voting, threshold phenomena and complexity theory \cite{O2014}.  Nevertheless, $\Omega(\sqrt{\log n}/\sqrt{n})$ may still be sub-optimal.  Kalai et al. \cite{KKM2016} stated that, when $\mathcal{A}$ is the \emph{tribes} family, i.e., the balanced increasing function obtained by partitioning $[n]$ into disjoint blocks (''tribes'') of size $r\approx \log n-\log\log n$ and declaring $x\in\mathcal A$ iff at least one tribe is all-ones, then
\[
\mathrm{Cov}_\mu(\mathbf{1}_\mathcal{A}, \mathbf{1}_{\mathcal{B}})=\Theta(\log n/\sqrt{n}).
\]
Moreover, they conjectured that the same logarithmic advantage should hold in full generality.
In this paper our first result confirms that this conjecture is indeed attainable. Before presenting our result, we first set up the required notation.

A Boolean function is a mapping  
\(f : \Omega_n \to \{0,1\}\)  
that assigns a single binary value to each \(n\)-bit string.  
A \emph{linear threshold function } (LTF) is a Boolean function defined by  
\(f(x) = \mathbf{1}_{\{\sum_{i=1}^n a_i x_i > t\}}\),  
where \(a_1,\dots,a_n\in\mathbb R\) are weights and \(t\in\mathbb R\) is a threshold.
A Boolean function \(f\) is \emph{monotone} if  
\(x\le y\) coordinate-wise implies \(f(x)\le f(y)\).
The \emph{majority function}, denoted \(\mathrm{Maj}(x)\), is the monotone LTF, i.e.  
\(\mathrm{Maj}(x)=\mathbf{1}_{\{\sum_{i=1}^n x_i>n/2\}}\). We write $\mathbf{x}_{i}(x)=x_i$ ($1\le i\le n$) for the $i$-th coordinate projection.

With this notation in place, we can state our first main result.
\begin{theorem}\label{Theo_buchong}
    Let $\mathcal{A}\subset \Omega_n$ be an increasing, balanced, and regular family. Then there exists an absolute constant $c>0$ such that
    \begin{align}
        \mathrm{Cov}_\mu (\mathbf{1}_{\mathcal{A}}, \mathrm{Maj})\ge c\frac{\log n}{\sqrt{n}}.
    \end{align}
\end{theorem}

Theorem~\ref{Theo_buchong} resolves the regular case of the conjecture and provides an improved bound compared to Corollary 1.10 in \cite{KKM2016}. In addition, Kalai et al. \cite{KKM2016} proposed the following conjecture:
\begin{conjecture}[Conjecture 1.11, \cite{KKM2016}]\label{Con_1}
For every increasing and balanced family $\mathcal{A}\subseteq\Omega_n$ there exists an increasing linear-threshold family  
$\mathcal{B}=\bigl\{x\in \Omega_n:\sum_{i=1}^n a_i x_i>t\bigr\}$ with non-negative weights $a_i$ such that
\[
\operatorname{Cov}_\mu(\mathbf{1}_\mathcal{A},\mathbf{1}_\mathcal{B})\geq c\,\frac{\log n}{\sqrt{n}},
\]
where $c>0$ is a universal constant.
\end{conjecture}

Our second result solve this conjecture.
\begin{theorem}\label{Theo_main1}
    For every increasing and balanced family $\mathcal{A}\subseteq\Omega_n$, there exists  a Boolean function $h\in\{\mathbf{x}_1, \cdots, \mathbf{x}_n, \mathrm{Maj}\}$ such that
\[
\operatorname{Cov}_\mu(\mathbf{1}_\mathcal{A}, h)\geq c\,\frac{\log n}{\sqrt{n}},
\]
where $c>0$ is a universal constant.
\end{theorem}
Note that for the coordinate projection \(f_i(x)=x_i\) (\(1\le i\le n\)), the corresponding family \(\mathcal{B}\) in Conjecture \ref{Con_1} is simply the dictator half-space \(\{x\in\Omega_n:x_i>0\}\); for the majority function \(f(x)=\mathrm{Maj}(x)\) we obtain \(\mathcal{B}=\{x\in\Omega_n:\sum_i x_i>n/2\}\).  
Thus Theorem \ref{Theo_main1} directly verifies Conjecture \ref{Con_1}.

The preceding discussion of correlation inequalities focuses on the uniform measure over $\Omega_n$. Equally important, however, are the corresponding statements under Gaussian measure, to which we now turn.

Let $\gamma$ denote the standard Gaussian measure $\mathcal N(0,I_n)$ on $\mathbb R^n$. In a celebrated work, Royen \cite{R2014} proved the long-standing conjecture that any two symmetric convex sets are non-negatively correlated under $\gamma$. For qualitative correlation inequalities, Keller et al.\ \cite{KMS2014} established a Gaussian version of Talagrand’s correlation inequality for increasing families, and De et al. \cite{DNS2022} later extended this result to symmetric convex families.

As discussed earlier, the KKM-type correlation inequality (see \eqref{Eq_KKM_result}) can yield tighter bounds than its Talagrand-type counterpart in certain settings.  
To the best of our knowledge, no Gaussian analogue has been recorded.  
\textbf{Problem 6.3} of \cite{KKM2016} asks whether such an analogue exists.  
Our second main result establishes a Gaussian KKM-type inequality for the class of LTF sets.

Consider two families \(\mathcal{A}, \mathcal{B}\subset \mathbb{R}^n\).  
Under the Gaussian measure \(\gamma\), define the influence of the \(k\)-th coordinate on \(\mathcal{A}\) by  
\[
I_k^{(\gamma)}(\mathcal{A})=\mathbb{E}_{\gamma}\!\bigl[\mathbf{1}_{\mathcal{A}}\,\mathbf{x}_k\bigr]
\]  
and set  
\[
\mathcal{W}_1(\mathcal{A}, \mathcal{B})=\sum_{k=1}^n I_k^{(\gamma)}(\mathcal{A})\,I_k^{(\gamma)}(\mathcal{B}),
\]  
where $\mathbf{x}_k$ is the $k$-th coordinate projection defined as above. The intuition behind this notion of Gaussian influence is sketched in Appendix \ref{appendix_2}.

\begin{theorem}\label{Theo_main_2}
    Let $w=(w_1, \cdots, w_n),v=(v_1, \cdots, v_n)$ be vectors in $\mathbb{R}^n$ such that $\Vert w\Vert_2=\Vert v\Vert_2=1$ and define $\rho=\langle w, v\rangle$. Given $t, s\in \mathbb{R}$, define the following LTF sets:
    \begin{align}
        \mathcal{A}=\{x\in\mathbb{R}^n: \sum_{i}x_i w_i>t \},\quad \mathcal{B}=\{x\in\mathbb{R}^n: \sum_{i}x_i v_i>s \}.
    \end{align}
    Then, we have
    \begin{align}\label{lower bound for gaussian}
        \mathrm{Cov}_\gamma(\mathbf{1}_{\mathcal{A}},\mathbf{1}_{\mathcal{B}})\ge  c\,\frac{\mathcal{W}^{(\gamma)}_1(\mathcal{A},\mathcal{B})}{\sqrt{\log\frac{e}{\mathcal{W}^{(\gamma)}_1(\mathcal{A},\mathcal{A})}}\sqrt{\log\frac{e}{\mathcal{W}^{(\gamma)}_1(\mathcal{B},\mathcal{B})}}},
    \end{align}
    where $c>0$ is a universal constant.
\end{theorem}

 Next, we give an example in which the bound in Theorem~\ref{Theo_main_2} is tight up to an absolute constant.

\begin{proposition}\label{Prop_1}
     Let $w=(w_1, \cdots, w_n)$ be a vector in $\mathbb{R}^n$ such that $\Vert w\Vert_2=1$. Given $t\ge 1$, define the following LTF sets:
    \begin{align}
        \mathcal{A}=\{x\in\mathbb{R}^n: \sum_{i}x_i w_i>t \},\quad \mathcal{B}=\{x\in\mathbb{R}^n: \sum_{i}x_i w_i>-t \}.
    \end{align}
    Then, we have
    \begin{align}      \mathrm{Cov}_\gamma(\mathbf{1}_{\mathcal{A}},\mathbf{1}_{\mathcal{B}})\le  \frac{2\mathcal{W}^{(\gamma)}_1(\mathcal{A},\mathcal{B})}{\sqrt{\log\frac{e}{\mathcal{W}^{(\gamma)}_1(\mathcal{A},\mathcal{A})}}\sqrt{\log\frac{e}{\mathcal{W}^{(\gamma)}_1(\mathcal{B},\mathcal{B})}}}.
    \end{align}
\end{proposition}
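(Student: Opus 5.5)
The plan is to compute both sides of the inequality in Proposition~\ref{Prop_1} explicitly, since everything reduces to the one-dimensional Gaussian $Z=\langle x,w\rangle\sim\mathcal N(0,1)$. Write $\varphi$ for the standard normal density and $\bar\Phi(t)=\gamma(Z>t)$. Then $\mathcal A=\{Z>t\}\subset\mathcal B=\{Z>-t\}$, so
\begin{align}
\mathrm{Cov}_\gamma(\mathbf 1_{\mathcal A},\mathbf 1_{\mathcal B})=\gamma(\mathcal A)-\gamma(\mathcal A)\gamma(\mathcal B)=\bar\Phi(t)\bigl(1-\bar\Phi(-t)\bigr)=\bar\Phi(t)^2 .
\end{align}

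Next I compute the Gaussian influences. Decompose $x_k=w_kZ+Y_k$, where $Y_k$ is independent of $Z$ and centered. This gives
\begin{align}
I_k^{(\gamma)}(\mathcal A)=w_k\,\mathbb E[Z\mathbf 1_{\{Z>t\}}]=w_k\varphi(t),\qquad I_k^{(\gamma)}(\mathcal B)=w_k\varphi(-t)=w_k\varphi(t).
\end{align}
Since $\|w\|_2=1$, it follows that $\mathcal W^{(\gamma)}_1(\mathcal A,\mathcal B)=\mathcal W^{(\gamma)}_1(\mathcal A,\mathcal A)=\mathcal W^{(\gamma)}_1(\mathcal B,\mathcal B)=\varphi(t)^2$. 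Moreover $\log\frac{e}{\varphi(t)^2}=1+\log(2\pi)+t^2$. The claim is therefore equivalent to the scalar inequality
\begin{align}
\bar\Phi(t)^2\le \frac{2\varphi(t)^2}{t^2+1+\log(2\pi)},\qquad t\ge 1 .
\end{align}

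To prove the scalar inequality I use Mills' ratio, $\bar\Phi(t)\le\varphi(t)/t$ for $t>0$. This reduces the inequality to $2t^2\ge t^2+1+\log(2\pi)$, i.e.\ $t^2\ge 1+\log(2\pi)\approx 2.84$, which settles $t\ge 1.69$.

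The remaining range $t\in[1,1.69]$ is the main obstacle, because the crude Mills bound is too weak there. I would use a sharper upper bound for the Mills ratio of Birnbaum--Sampford type, $\bar\Phi(t)\le \frac{2\varphi(t)}{t+\sqrt{t^2+8/\pi}}$. With it, it suffices to check
\begin{align}
\bigl(t+\sqrt{t^2+8/\pi}\bigr)^2\ge 2\bigl(t^2+1+\log(2\pi)\bigr).
\end{align}
The left side equals $2t^2+8/\pi+2t\sqrt{t^2+8/\pi}$, so it is enough that $8/\pi+2t\sqrt{t^2+8/\pi}\ge 2+2\log(2\pi)\approx 5.68$. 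The term $2t\sqrt{t^2+8/\pi}$ is increasing in $t$, so it suffices to check $t=1$, where the left side is $\approx 2.55+3.76=6.31$. This in fact covers all $t\ge1$ at once.

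If one prefers to avoid quoting the sharper Mills bound, a fallback is direct numerical verification on the compact interval $[1,1.69]$. This works because the ratio $\bar\Phi(t)/\varphi(t)$ is decreasing and the required margin at $t=1$ is comfortable: $\bar\Phi(1)\approx0.159$, while $\sqrt2\varphi(1)/\sqrt{3.84}\approx0.175$.
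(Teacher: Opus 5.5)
Your proposal is correct, and it matches the paper's proof up to the scalar inequality. Both use $\mathcal A\subseteq\mathcal B$ to get $\mathrm{Cov}_\gamma(\mathbf 1_{\mathcal A},\mathbf 1_{\mathcal B})=\Phi(-t)^2$, both compute $I_k^{(\gamma)}(\mathcal A)=I_k^{(\gamma)}(\mathcal B)=w_k\varphi(t)$, and both find that all three quantities $\mathcal W^{(\gamma)}_1$ equal $\varphi(t)^2$.

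The two routes differ only in the last step. The paper uses just the crude bound $\Phi(-t)\le\varphi(t)/t$, asserts $\log(e/\varphi(t)^2)=t^2+1$, and closes with $t^2/(t^2+1)\ge\frac12$. That identity drops the $\log(2\pi)$ term you correctly kept. With the true value $t^2+1+\log(2\pi)$, the paper's chain only works for $t\ge\sqrt{1+\log(2\pi)}\approx1.685$. On $[1,1.685)$ it yields the inequality only with constant $2+\log(2\pi)\approx3.84$ in place of $2$.

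Your sharper Mills-ratio bound is exactly what repairs this. Your check is right: $8/\pi+2t\sqrt{t^2+8/\pi}$ is increasing, and at $t=1$ it is $\approx6.31\ge5.68$. So you pay with one slightly less elementary but classical inequality, and in return you get the proposition exactly as stated for all $t\ge1$, whereas the paper's argument as written has a gap for $t<\sqrt{1+\log(2\pi)}$. For a citable form, your bound is $e^{x^2}\int_x^\infty e^{-u^2}\,du\le\bigl(x+\sqrt{x^2+4/\pi}\bigr)^{-1}$ for $x\ge0$ (Abramowitz--Stegun 7.1.13), with $x=t/\sqrt2$.

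One minor remark on your fallback. The right-hand side $2/(t^2+1+\log(2\pi))$ also decreases in $t$, so monotonicity of $\bar\Phi/\varphi$ plus the margin at $t=1$ does not by itself cover $[1,1.69]$. You would need a subdivision such as $[1,1.34]\cup[1.34,1.69]$, comparing $\bar\Phi/\varphi$ at each left endpoint with the right-hand side at the corresponding right endpoint. Your main argument does not need this.
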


Next, we extend Theorem \ref{Theo_main_2} to a broader setting.
\begin{theorem}\label{Theo_3}
Let $w,v\in\mathbb R^{n}$ with $\|w\|_{2}=\|v\|_{2}=1$ and $\rho=\langle w,v\rangle\ge 0$.  
Let $f,g:\mathbb R\to[0,1]$ be non-decreasing and left-continuous, and set  for $x\in\mathbb{R}^n$
$$F(x)=f\!\bigl(\langle w,x\rangle\bigr),\qquad G(x)=g\!\bigl(\langle v,x\rangle\bigr).$$
Then, there exists a universal constant $c>0$ such that
\[ \mathrm{Cov}_{\gamma}(F, G) \geq c \frac{\mathcal{W}^{(\gamma)}_1(F, G)}{\sqrt{\log \frac{e}{\mathcal{W}^{(\gamma)}_1(F, F)}} \sqrt{\log \frac{e}{{\mathcal{W}}^{(\gamma)}_1(G, G)}}}, \]
where $\mathcal{W}^{(\gamma)}_1(F, G)=\sum_k \mathbb{E}_\gamma[F\cdot\mathbf{x}_k]\mathbb{E}_\gamma[G\cdot\mathbf{x}_k]$. 
\end{theorem}

\section{Proofs of Theorems \ref{Theo_buchong} and \ref{Theo_main1}}

  We first state an approximation property that serves as the principal technical ingredient of the proofs.

\begin{lemma}[Theorem 1.7 in \cite{OW2013}]\label{Lem_1}
For every $0<\varepsilon<1/2$ there exists a constant $0<\delta=\delta(\varepsilon)<1$ such that the following holds.
Let $f$ be a monotone Boolean function and let $\mu$ denote the uniform probability measure on $\Omega_{n}$. Then at least one of the statements below must hold:
\begin{enumerate}[\rm(i)]
\item $\max\bigl\{\mu(f(x)=0),\;\mu(f(x)=1)\bigr\}\;\ge\;1-\varepsilon$;
\item $\displaystyle\max_{1\le i\le n}\mu\bigl(f(x)=x_i\bigr)\;\ge\;\frac{1}{2}+\frac{1}{n^{\varepsilon}}$;
\item $\mu\bigl(f(x)=\mathrm{Maj}(x)\bigr)\;\ge\;\frac{1}{2}+\delta\frac{\log n}{\sqrt{n}}$.
\end{enumerate}
\end{lemma}
\begin{proof}[Proof of Theorem \ref{Theo_buchong}]
 By Lemma~\ref{Lem_1} applied with $\varepsilon=\frac{1}{3}$ and $f=\mathbf{1}_{\mathcal{A}}$, one of the three alternatives must hold.

 It is obvious that the alternative (i) in Lemma \ref{Lem_1} is impossible since $\mathcal{A}$ is balanced. We next rule out alternative (ii) in Lemma \ref{Lem_1}. 

 Due to that $\mathbf{1}_{\mathcal{A}}$ is increasing, we have for $i\in [n]$
 \begin{align}
     &\mu\left(\mathbf{1}_{\mathcal{A}}=1\vert x_i=1 \right)-\mu\left(\mathbf{1}_{\mathcal{A}}=1\vert x_i=0 \right) \nonumber\\
     =&2\left(\mu\left(\mathbf{1}_{\mathcal{A}}=1, x_i=1 \right)-\mu\left(\mathbf{1}_{\mathcal{A}}=1, x_i=0 \right)   \right)\nonumber\\
     =& 4I_i(\mathcal{A}) =:T\ge 0.\nonumber
 \end{align}
 Note that $T$ does not depend on the index $i$ since $\mathcal{A}$ is regular.

For each $i\in [n]$,
\begin{align}
    \mu\left(\mathbf{1}_{\mathcal{A}}(x)=x_i \right)&=\frac{1}{2}\left(\mu\left(\mathbf{1}_{\mathcal{A}}=1\vert x_i=1 \right)+\mu\left(\mathbf{1}_{\mathcal{A}}=0\vert x_i=0 \right) \right)\nonumber\\
    &=\frac{1}{2}+\frac{1}{2}\left(\mu\left(\mathbf{1}_{\mathcal{A}}=1\vert x_i=1 \right)-\mu\left(\mathbf{1}_{\mathcal{A}}=1\vert x_i=0 \right)  \right)\nonumber\\
    &=\frac{1}{2}+2T.
\end{align}
 For $x\in \Omega_n$ and $i\in [n]$, define
 \begin{align}
     g(x)=2\mathbf{1}_{\mathcal{A}}(x)-1,\quad \chi_i(x)=2x_i-1.\nonumber
 \end{align}
 Obviously, the functions $\chi_1,\cdots,\chi_n$ are orthonormal in $L^2(\Omega_n, \mu)$. Bessel's inequality yields that
 \begin{align}\label{Eq_Bessel}
     \sum_{i=1}^n \bigl(\mathbb E_\mu[g(x)\chi_i(x)]\bigr)^2 \le \mathbb E_\mu[g(x)^2]=1
 \end{align}
 Note that for each $i$,
 \begin{align}
     \mathbb{E}_{\mu}[g(x)\chi_i(x)]&=\mu(\mathbf{1}_{\mathcal{A}}(x)=1\vert x_i=1)-\mu(\mathbf{1}_{\mathcal{A}}(x)=1\vert x_i=0)=T.
 \end{align}
 Therefore, we have by \eqref{Eq_Bessel}
 \begin{align}
     T\le \frac{1}{\sqrt{n}}.\nonumber
 \end{align}
Hence, we have 
\begin{align}
    \mu(\mathbf{1}_{\mathcal{A}}(x)=x_i)\le \frac{1}{2}+\frac{2}{\sqrt{n}}\le \frac{1}{2}+\frac{1}{n^{1/3}},
\end{align}
implying the alternative (ii) in Lemma \ref{Lem_1} is impossible. Hence, there exists a constant $\delta$ such that
\begin{align}
    \mu(\mathbf{1}_{\mathcal{A}}(x)=\mathrm{Maj}(x))\ge \frac{1}{2}+\frac{\delta}{2}\frac{\log n}{\sqrt{n}}.
\end{align}
Let $\mathcal{B}=\{x\in \Omega_n: \sum_i x_i>\frac{n}{2} \}$. Then $\mathrm{Maj}(x)=\mathbf{1}_{\mathcal{B}}(x)$. We conclude the proof by noting that
\begin{align}
    \mu(\mathbf{1}_{\mathcal{A}}=\mathbf{1}_{\mathcal{B}})&=\mathbb{E}_{\mu}[\mathbf{1}_{\mathcal{A}}\mathbf{1}_{\mathcal{B}}]-\mathbb{E}_{\mu}[(1-\mathbf{1}_{\mathcal{A}})(1-\mathbf{1}_{\mathcal{B}})]\nonumber\\
    &=1-\mathbb{E}_\mu[\mathbf{1}_{\mathcal{A}}]-\mathbb{E}_\mu[\mathbf{1}_{\mathcal{B}}]+2\mathbb{E}_\mu[\mathbf{1}_{\mathcal{A}}\mathbf{1}_{\mathcal{B}}]\nonumber\\
    &=\frac{1}{2}+2\left(\mathbb{E}_\mu[\mathbf{1}_{\mathcal{A}}\mathbf{1}_{\mathcal{B}}]-\mathbb{E}_\mu[\mathbf{1}_{\mathcal{A}}]\mathbb{E}_\mu[\mathbf{1}_{\mathcal{B}}] \right)\nonumber\\
    &=\frac{1}{2}+2\mathrm{Cov}_\mu(\mathbf{1}_{\mathcal{A}}, \mathbf{1}_{\mathcal{B}}),
\end{align}
where the third inequality is due to that $\mathcal{A}$ is balanced.
\end{proof}

\begin{proof}[Proof of Theorem \ref{Theo_main1}]
    Note that, for any Boolean function $h$, we have
    \begin{align}
        \mathrm{Cov}_\mu(\mathbf{1}_{\mathcal{A}}, h)&=\mathbb{E}_\mu(\mathbf{1}_{\mathcal{A}}\cdot h)-\mathbb{E}_\mu\mathbf{1}_{\mathcal{A}}\cdot\mathbb{E}_\mu h\nonumber\\&=\mu(\mathbf{1}_{\mathcal{A}}=h=1)-\mu(\mathbf{1}_{\mathcal{A}}=1)\cdot\mu(h=1)\nonumber\\
        &=\mu(\mathbf{1}_{\mathcal{A}}=h=1)-\frac{1}{2}\cdot\mu(h=1).
    \end{align}
    Since
    \begin{align}
\mu(\mathbf{1}_{\mathcal{A}}=h)&=\mu(\mathbf{1}_{\mathcal{A}}=h=1)+\mu(\mathbf{1}_{\mathcal{A}}=h=0)\nonumber\\
&=2\mu(\mathbf{1}_{\mathcal{A}}=h=1)-\mu(h=1)+\mu(\mathbf{\mathbf{1}_{\mathcal{A}}}=0),
    \end{align}
    and $\mathcal{A}$ is balanced, we have
    \begin{align}
        \mu(\mathbf{1}_{\mathcal{A}}=h=1)=\frac{\mu(\mathbf{1}_{\mathcal{A}}=h)+\mu(h=1)-1/2}{2}.
    \end{align}
    Hence, we have 
    \begin{align}\label{Eq_proof_1}
 \mathrm{Cov}_\mu(\mathbf{1}_{\mathcal{A}}, h)=\frac{1}{2}\mu(\mathbf{1}_{\mathcal{A}}=h)-\frac{1}{4}.
    \end{align}

    Since \(\mathcal{A}\) is increasing, its indicator \(\mathbf{1}_{\mathcal{A}}\) is a monotone Boolean function.  
Apply Lemma~\ref{Lem_1} to \(\mathbf{1}_{\mathcal{A}}\) with \(\varepsilon=1/4\).  There exists a universal constant \(0<\delta<1\) such that at least one of the following holds:

\begin{enumerate}[\rm(i)]
\item \(\max\bigl\{\mu(\mathbf{1}_{\mathcal{A}}=0),\;\mu(\mathbf{1}_{\mathcal{A}}=1)\bigr\}\ge\frac{3}{4}\);
\item \(\displaystyle\max_{1\le i\le n}\mu\bigl(\mathbf{1}_{\mathcal{A}}=x_i\bigr)\ge\frac{1}{2}+\frac{1}{n^{1/4}}\);
\item \(\mu\bigl(\mathbf{1}_{\mathcal{A}}=\mathrm{Maj}\bigr)\ge\frac{1}{2}+\delta\frac{\log n}{\sqrt{n}}\).
\end{enumerate}
Note that  
\[
\mu(\mathbf{1}_{\mathcal{A}}=1)=\mu(\mathcal{A})=\frac12<\frac34,
\]  
and the same bound holds for \(\mu(\mathbf{1}_{\mathcal{A}}=0)\).  
Therefore, statement (i) cannot hold. Note that, there exists a universal constant $c>0$ such that
\begin{align}
    \min\{\frac{1}{n^{1/4}},  \delta\frac{\log n}{\sqrt{n}} \}\ge c\frac{\log n}{\sqrt{n}}.
\end{align}
Hence, there exists a Boolean function $h\in \{\mathbf{x}_{1}, \cdots, \mathbf{x}_n, \mathrm{Maj}\}$ such that
\begin{align}
    \mu\big( \mathbf{1}_{\mathcal{A}}=h  \big)\ge \frac{1}{2}+c\frac{\log n}{\sqrt{n}}.
\end{align}
By virtue of \eqref{Eq_proof_1}, we have 
\begin{align}
    \mathrm{Cov}_\mu(\mathbf{1}_{\mathcal{A}}, h)\ge \frac{1}{2}\big( \frac{1}{2}+c\frac{\log n}{\sqrt{n}}  \big)-\frac{1}{4}=\frac{c\log n}{2\sqrt{n}},
\end{align}
which concludes the proof.

\end{proof}
\section{Proof of Theorem \ref{Theo_main_2}}

Before we begin the proof, we first introduce the sign function  
\[
\operatorname{sgn}(x)=
\begin{cases}
+1 & \text{if } x>0,\\[2pt]
-1 & \text{if } x\le0.
\end{cases}
\]
For clarity, we split the full proof into the following  lemmas.
\begin{lemma}\label{Lem_proof2_1}
   Let \(\xi=(\xi_1, \cdots, \xi_n)^\top\sim N(0,I_{n})\) and fix a unit vector \(w=(w_1, \cdots, w_n)^\top\in\mathbb{R}^{n}\) (\(\|w\|_{2}=1\)) and \(t\in\mathbb{R}\).  
For every \(k\in\{1,\dots,n\}\),

\[
\mathbb{E}\!\left[\operatorname{sgn}\!\Bigl(\sum_{i=1}^{n}w_{i}\xi_{i}-t\Bigr)\xi_{k}\right]=2\varphi(t)\,w_{k},\qquad \varphi(t)=\frac{1}{\sqrt{2\pi}}\,e^{-t^{2}/2}.
\]
\end{lemma}
\begin{proof}
    Let \(Y=\sum_{j=1}^{n}w_{j}\xi_{j}\) with \(\xi_{j}\sim N(0,1)\) i.i.d. and \(\|w\|_{2}=1\).  
Then the joint distribution of \((Y,\xi_{k})\) is

\[
(Y,\xi_{k})\sim N\!\left(\begin{pmatrix}0\\0\end{pmatrix},\begin{pmatrix}1&w_{k}\\w_{k}&1\end{pmatrix}\right).
\]

Conditioning on \(Y=y\) gives

\[
\xi_{k}\mid Y=y\;=\;w_{k}\,y\;+\;G,\qquad G\sim N(0,1-w_{k}^{2})\;\text{independent of }Y.
\]
Hence, we have by the conditional expectation formula
\[
\mathbb{E}\!\left[\operatorname{sgn}(Y-t)\xi_{k}\right]
=\mathbb{E}\!\Bigl[\operatorname{sgn}(Y-t)\,\mathbb{E}[\xi_{k}\mid Y]\Bigr]
=w_{k}\,\mathbb{E}\!\left[Y\operatorname{sgn}(Y-t)\right].
\]
The remaining expectation is
\[
\mathbb{E}\!\left[Y\cdot\operatorname{sgn}(Y-t)\right]
=\int_{-\infty}^{\infty}y\operatorname{sgn}(y-t)\frac{e^{-y^{2}/2}}{\sqrt{2\pi}}\,dy
=2\int_{t}^{\infty}y\frac{e^{-y^{2}/2}}{\sqrt{2\pi}}\,dy
=\frac{2}{\sqrt{2\pi}}e^{-t^{2}/2},
\]
which concludes the proof.
\end{proof}

\begin{lemma}\label{Lem_proof2_2}
    Let \(\xi\sim N(0,I_{n})\) and fix unit vectors \(w,v\in\mathbb{R}^{n}\) together with thresholds \(t,s\in\mathbb{R}\).  Assume that \(\rho=w^{\top}v>0\).
Then, there exists a universal constant \(c>0\) such that
\begin{equation}\label{sign form}
    \mathrm{Cov}\!\Bigl(\operatorname{sgn}(w^{\top}\xi-t),\,\operatorname{sgn}(v^{\top}\xi-s)\Bigr)
\ge c\,\frac{\rho\,\varphi(t)\varphi(s)}{(1+|t|)(1+|s|)},
\end{equation}
where \(\varphi(t)=\dfrac{1}{\sqrt{2\pi}}e^{-t^{2}/2}\).
\end{lemma}
\begin{proof}
	Note that, 
\begin{align}
    \mathrm{Cov}(w^\top\xi, v^\top\xi)=w^\top v=\rho.
\end{align}
    Hence, by Appendix \ref{appendix_3} and the Plackett formula (see \cite{P1954}),
\[
\begin{aligned}
&\operatorname{Cov}\!\Bigl(\operatorname{sgn}(w^{\top}\xi-t),\,\operatorname{sgn}(v^{\top}\xi-s)\Bigr)\\[2pt]
&\quad =4\Bigl[\mathbb{P}\bigl(w^\top \xi>t,\, v^\top\xi>s\bigr)
           -\mathbb{P}\bigl(w^\top \xi>t\bigr)\mathbb{P}\bigl(v^\top \xi>s\bigr)\Bigr]\\[4pt]
&\quad =4\int_{0}^{\rho}\phi_{r}(t,s)\,dr,
\end{aligned}
\]
where
\[
\phi_{r}(t,s)
=\frac{1}{2\pi\sqrt{1-r^{2}}}
\exp\!\biggl(-\frac{t^{2}+s^{2}-2rts}{2(1-r^{2})}\biggr).
\]
Define the following normalized ratio
\begin{align}
    \Gamma(t, s, \rho) := \frac{\text{Cov}(\operatorname{sgn}(w^{\top}\xi-t),\,\operatorname{sgn}(v^{\top}\xi-s))(1 + |t|)(1 + |s|)}{\rho \varphi(t) \varphi(s)} = 4 \frac{(1 + |t|)(1 + |s|)}{\rho} \int_0^\rho h_r(t, s) \, dr,
\end{align}
where
\begin{equation}\label{Eq_lem2_4}
h_r(t, s) = \frac{1}{\sqrt{1 - r^2}} \exp\left(\frac{rts - \frac{1}{2}(t^2 + s^2)r^2}{1 - r^2}\right).  
\end{equation}
We conclude the proof by Lemma \ref{Lem_3}.
\end{proof}

\begin{lemma}\label{Lem_3}
    Let $\Gamma(t, s, \rho)$ be the normalized ratio defined in the proof of Lemma \ref{Lem_proof2_2}. Then, we have for some absolute constant $c>0$
\begin{align}\label{Eq_lem2_1}
\Gamma(t,s,\rho)\ge c\qquad
\forall\,t,s\in\mathbb R,\;\rho\in(0,1].
\end{align}  
\end{lemma}
\begin{proof}
    Recall that
    \begin{align}
    \Gamma(t, s, \rho)  = 4 \frac{(1 + |t|)(1 + |s|)}{\rho} \int_0^\rho h_r(t, s) \, dr,
\end{align}
where $h_r(t, s)$ was defined in \eqref{Eq_lem2_4}.

We split the parameter space into three overlapping but exhaustive regions and prove a positive lower bound in each.

\textbf{Case I: $|t|\le 1$, $|s|\le 1$.}

On the compact set $(t,s,\rho)\in[-1,1]^{2}\times[0,1]$ the integrand $h_{r}(t,s)$ is continuous and strictly positive for $r>0$, while  
\[
\lim_{\rho\to 0^{+}}\Gamma(t,s,\rho)=4(1+|t|)(1+|s|)\ge 4>0.
\]
Hence $\Gamma$ extends continuously to $\rho=0$ and attains a positive minimum $c_{\mathrm I}>0$ on this square.

\textbf{Case II: $ts\ge 0, (t, s)\in\mathbb{R}^2$.}

Assume $t,s\ge 0$ (other sign combination is symmetric).  Note that for $|r| \leq 1/2$,
\[ \frac{rts - \frac{1}{2}(t^2 + s^2)r^2}{1 - r^2} \geq -\frac{\frac{1}{2}(t^2 + s^2)r^2}{1 - r^2}  \geq -2(t^2 + s^2)r^2. \]
Hence we have for $0\le r\le 1/2$
\[
h_{r}(t,s)\ge\exp\!\bigl(-2(t^{2}+s^{2})r^{2}\bigr).
\]
Set  
\[
r_{0}:=\min\!\Bigl\{\rho,\tfrac{1}{2\sqrt{t^{2}+s^{2}}}\Bigr\}\le\tfrac12.
\]
Then $(t^{2}+s^{2})r^{2}\le 1/4$ for $r\in[0,r_{0}]$, so  
\[
\int_{0}^{\rho}h_{r}\,dr\ge\int_{0}^{r_{0}}e^{-2(t^{2}+s^{2})r^{2}}dr\ge e^{-1/2}r_{0}.
\]
Plugging into $\Gamma$,
\[
\Gamma(t,s,\rho)\ge 4e^{-1/2}(1+t)(1+s)\cdot\frac{r_{0}}{\rho}.
\]

 If $\rho\le\tfrac{1}{2\sqrt{t^{2}+s^{2}}}$, then $r_{0}=\rho$ and $\Gamma\ge 4e^{-1/2}$.
 Otherwise $r_{0}=\tfrac{1}{2\sqrt{t^{2}+s^{2}}}$ and  
\[
\frac{(1+t)(1+s)}{\sqrt{t^{2}+s^{2}}}\ge\frac12 \quad\Longrightarrow\quad \Gamma\ge 2e^{-1/2}.
\]
Thus in \textbf{Case II}, $\Gamma\ge c_{\mathrm{II}}:=e^{-1/2}$.

\textbf{Case III: $ts<0, (t, s)\in\mathbb{R}^2$.}

Write $t>0$, $s=-k<0$ with $k>0$. By symmetry and \textbf{Case I}, assume $t,k\ge 1$.  
Lemma~\ref{Lem_appendix_4} gives $h_{r}(t,-k)\ge e^{-1}$ for $r\in[0,\tfrac{1}{2tk}]$.  

Set $r_{1}:=\min\!\bigl\{\rho,\tfrac{1}{2tk}\bigr\}$, then  
\[
\int_{0}^{\rho}h_{r}\,dr\ge e^{-1}r_{1},
\]
so
\[
\Gamma(t,-k,\rho)\ge 4e^{-1}(1+t)(1+k)\cdot\frac{r_{1}}{\rho}.
\]

If $\rho\le\tfrac{1}{2tk}$, then $r_{1}=\rho$ and $\Gamma\ge 4e^{-1}$.
If $\rho>\tfrac{1}{2tk}$, then $r_{1}=\tfrac{1}{2tk}$ and  
\[
\frac{(1+t)(1+k)}{tk}\ge 1 \quad\Longrightarrow\quad \Gamma\ge 2e^{-1}.
\]
Hence in Case III, $\Gamma\ge c_{\mathrm{III}}:=2e^{-1}$.

When $t < 1 \leq k$, let
\[
r_1 := \min \left\{ \rho, \frac{1}{2k} \right\}.
\]
Then $r_1 \leq 1/2$, and for $0 \leq r \leq r_1$:
\[
r t k \leq \frac{t}{2} \leq \frac{1}{2}, \quad (t^2 + k^2) r^2 \leq \frac{t^2 + k^2}{4k^2} \leq \frac{1 + k^2}{4k^2} \leq \frac{1}{2}.
\]

A similiar argument as Lemma~\ref{Lem_appendix_4} gives a constant lower bound for $\Gamma$.

Taking  
\[
c:=\min\{c_{\mathrm I},\,c_{\mathrm{II}},\,c_{\mathrm{III}}\}>0
\]
completes the proof.
\end{proof}

Now, we are prepared to prove the following result.
\begin{proof}[Proof of Theorem \ref{Theo_main_2}]

Set
\[
\tilde{\mathbf{1}}_{\mathcal{A}}(x)=
\begin{cases}
\phantom{-}1, & x\in\mathcal{A},\\[2pt]
-1, & x\notin\mathcal{A},
\end{cases}
\]  
and define the signed influence of the $k$-th coordinate by  
\[
\tilde{I}_k^{(\gamma)}(\mathcal{A})=\mathbb{E}_{\gamma}\!\bigl[\tilde{\mathbf{1}}_{\mathcal{A}}\,\mathbf{x}_k\bigr].
\]  
Put  
\[
\tilde{\mathcal{W}}_1(\mathcal{A},\mathcal{B})=\sum_{k=1}^{n}\tilde{I}_k^{(\gamma)}(\mathcal{A})\,\tilde{I}_k^{(\gamma)}(\mathcal{B}).
\]  
Since $\mathbf{1}_{\mathcal{A}}=\tfrac12\bigl(\tilde{\mathbf{1}}_{\mathcal{A}}+1\bigr)$, Theorem~\ref{Theo_main_2} follows once we establish  
\[
\mathrm{Cov}_{\gamma}\!\bigl(\tilde{\mathbf{1}}_{\mathcal{A}},\tilde{\mathbf{1}}_{\mathcal{B}}\bigr)
\ge c\,\frac{\tilde{\mathcal{W}}^{(\gamma)}_1(\mathcal{A},\mathcal{B})}
{\sqrt{\log\frac{e}{\tilde{\mathcal{W}}^{(\gamma)}_1(\mathcal{A},\mathcal{A})}}
 \sqrt{\log\frac{e}{\tilde{\mathcal{W}}^{(\gamma)}_1(\mathcal{B},\mathcal{B})}}}.
\]

    Lemma \ref{Lem_proof2_1} yields that
    \begin{align}
        \tilde{\mathcal{W}}^{(\gamma)}_1(\mathcal{A}, \mathcal{B})=4\varphi(t)\varphi(s)\sum_{k=1}^nw_k v_k=4\rho \varphi(t)\varphi(s),
    \end{align}
    and
    \begin{align}
        \tilde{\mathcal{W}}^{(\gamma)}_1(\mathcal{A}, \mathcal{A})=4\varphi^2(t),\quad
        \tilde{\mathcal{W}}^{(\gamma)}_1(\mathcal{B}, \mathcal{B})=4\varphi^2(s).
    \end{align}
    Hence, we have 
    \begin{align}
        \log\frac{e}{\tilde{\mathcal{W}}^{(\gamma)}_1(\mathcal{A},\mathcal{A})}=\log\frac{e}{4\varphi^2(t)}=t^2+\log(\frac{e\pi}{2}),
    \end{align}
    which yields that, for some universal constant $c_1>0$
    \begin{align}
       \frac{1}{c_1}(1+\vert t\vert)\le  \sqrt{\log\frac{e}{\tilde{\mathcal{W}}^{(\gamma)}_1(\mathcal{A},\mathcal{A})}}\le c_1(1+\vert t\vert).
    \end{align}
    Similarly, there exists a universal constant $c_2>0$ such that
    \begin{align}
       \frac{1}{c_2}(1+\vert s\vert)\le  \sqrt{\log\frac{e}{\tilde{\mathcal{W}}^{(\gamma)}_1(\mathcal{B},\mathcal{B})}}\le c_2(1+\vert s\vert).
    \end{align}

   Applying Lemma \ref{Lem_proof2_2}, we have for some universal constant $c_3>0$
    \begin{align}
        \mathrm{Cov}_{\gamma}\!\bigl(\tilde{\mathbf{1}}_{\mathcal{A}},\tilde{\mathbf{1}}_{\mathcal{B}}\bigr)\ge c_3\frac{\rho\varphi(t)\varphi(s)}{(1+\vert t\vert)(1+\vert s\vert)},
    \end{align}
which concludes the proof.
\end{proof}

\section{Proof of Proposition \ref{Prop_1}}
\begin{proof}[Proof of Proposition \ref{Prop_1}]
Let $\Phi(\cdot)$ and $\varphi(\cdot)$ be the cumulative distribution function and probability density function, of the standard normal distribution.  
Because $\mathcal A\subseteq\mathcal B$,  
\[
\operatorname{Cov}_\gamma(\mathbf1_{\mathcal A},\mathbf1_{\mathcal B})
=\gamma(\mathcal A)\bigl(1-\gamma(\mathcal B)\bigr)
=\Phi(-t)^2.
\]
The standard Gaussian tail bound $\Phi(-t)\le\varphi(t)/t$ ($t>0$) gives  

\[
\operatorname{Cov}_\gamma(\mathbf1_{\mathcal A},\mathbf1_{\mathcal B})\le\frac{\varphi(t)^2}{t^2}.
\]

Lemma~\ref{Lem_proof2_1} yields for $k\in\{1, \cdots, n\}$  
\[
I_k^{(\gamma)}(\mathcal A)=w_k\varphi(t),\qquad
I_k^{(\gamma)}(\mathcal B)=w_k\varphi(-t)=w_k\varphi(t),
\]
so that  
\[
\mathcal W_1^{(\gamma)}(\mathcal A,\mathcal B)=\varphi(t)^2,\qquad
\mathcal W_1^{(\gamma)}(\mathcal A,\mathcal A)=\mathcal W_1^{(\gamma)}(\mathcal B,\mathcal B)=\varphi(t)^2.
\]
Consequently,
\[
\frac{\mathcal W_1^{(\gamma)}(\mathcal A,\mathcal B)}
{\sqrt{\log\dfrac{e}{\mathcal W_1^{(\gamma)}(\mathcal A,\mathcal A)}}\;
\sqrt{\log\dfrac{e}{\mathcal W_1^{(\gamma)}(\mathcal B,\mathcal B)}}}
=\frac{\varphi(t)^2}{\log\!\bigl(e/\varphi(t)^2\bigr)}
=\frac{\varphi(t)^2}{t^2+1}.
\]

For $t\ge 1$ we have $t^2/(t^2+1)\ge\frac12$, completing the proof.
\end{proof}

\section{Proof of Theorem \ref{Theo_3}}
In this section we fix vectors \(w,v\in\mathbb R^{n}\) with \(\|w\|_{2}=\|v\|_{2}=1\) and \(\rho=\langle w,v\rangle\ge 0\).  
For non-decreasing left-continuous functions \(f,g:\mathbb R\to[0,1]\) define  
\[
F(x)=f(\langle w,x\rangle),\qquad G(x)=g(\langle v,x\rangle)\qquad(x\in\mathbb R^{n}).
\]  
The corresponding Lebesgue–Stieltjes measures are given by  
\[
\mu_f([a,b))=f(b)-f(a),\qquad \mu_g([a,b))=g(b)-g(a).
\]
\begin{lemma}\label{Lem_Theorem3_1}
    Let $X=(X_1, \cdots, X_n)\sim N(0, I_n)$. Then we have
    \begin{align}
        \sum_k\mathbb{E}[F(X)\cdot X_k]\mathbb{E}[G(X)\cdot X_k]=\rho \mathbb{E}[f(X_1)X_1]\mathbb{E}[g(X_1)X_1]
    \end{align}
    and
    \begin{align}
        \sum_k\big(\mathbb{E}[F(X)\cdot X_k]\big)^2=\big(\mathbb{E}[f(X_1)X_1]\big)^2,\quad \sum_k\big(\mathbb{E}[G(X)\cdot X_k]\big)^2=\big(\mathbb{E}[g(X_1)X_1]\big)^2.
    \end{align}
\end{lemma}
\begin{proof}
    Note that
    \begin{align}
        \mathbb{E}[F(X)X]=\mathbb{E}[f(\langle w, X\rangle)(\langle w, X\rangle\cdot w+X-\langle w, X\rangle\cdot w)].
    \end{align}
    It is easy to verify that $X-\langle w, X\rangle\cdot w$ is independent of $\langle w, X\rangle$. Hence, we have
     \begin{align}\label{Eq_Lemma5.1_1}
        \mathbb{E}[F(X)X]=\mathbb{E}[f(\langle w, X\rangle)\langle w, X\rangle\cdot w]=\mathbb{E}[f(X_1)X_1]\cdot w.
    \end{align}
    Then, we can conclude the proof with \eqref{Eq_Lemma5.1_1}.
\end{proof}

\begin{lemma}\label{Lem_theo3_2}
Let
\[
\varphi(t)=\frac{1}{\sqrt{2\pi}}e^{-t^2/2},
\qquad
a_f:=\int_{\mathbb R}\varphi(t)\,d\mu_f(t), \qquad m_f:= \mu_f(\mathbb R)
\]
If $a_f=0$, then $\mu_f=0$ and hence
\[
\int_{\mathbb R}(1+|t|)\varphi(t)\,d\mu_f(t)=0.
\]
If $a_f>0$, then
\[
\int_{\mathbb R}(1+|t|)\varphi(t)\,d\mu_f(t)
\le
a_f\Bigl(1+\sqrt{2\log\frac{m_f}{\sqrt{2\pi}\,a_f}}\Bigr)
\le
a_f\Bigl(1+\sqrt{\log\frac{1}{2\pi a_f^2}}\Bigr).
\]
In particular, there exists a universal constant $C>0$ such that
\[
\int_{\mathbb R}(1+|t|)\varphi(t)\,d\mu_f(t)
\le
C\,a_f\sqrt{\log\frac{e}{a_f^2}}.
\]
\end{lemma}

\begin{proof}
If $m_f=0$, then $\mu_f=0$, so the claim is trivial. Assume henceforth that $m_f>0$.
Let
\[
\nu_f:=\frac{\mu_f}{m_f},
\]
and let $T\sim \nu_f$. Set
\[
S:=|T|,
\qquad
U:=e^{-S^2/2}\in(0,1].
\]
Then
\[
a_f
=
\int_{\mathbb R}\varphi(t)\,d\mu_f(t)
=
\frac{m_f}{\sqrt{2\pi}}\mathbb E[U],
\]
and
\[
\int_{\mathbb R}(1+|t|)\varphi(t)\,d\mu_f(t)
=
\frac{m_f}{\sqrt{2\pi}}\mathbb E[(1+S)U].
\]
Since $S=\sqrt{-2\log U}$, if we define
\[
h(u):=u\bigl(1+\sqrt{-2\log u}\bigr),
\qquad 0<u\le 1,
\]
then
\[
\int_{\mathbb R}(1+|t|)\varphi(t)\,d\mu_f(t)
=
\frac{m_f}{\sqrt{2\pi}}\mathbb E[h(U)].
\]

A direct computation gives
\[
h''(u)
=
-\frac{1}{u\sqrt{-2\log u}}
-\frac{1}{u(-2\log u)^{3/2}}
<0
\qquad (0<u<1),
\]
so $h$ is concave on $(0,1]$. Therefore, by Jensen's inequality,
\[
\mathbb E[h(U)]
\le
h(\mathbb E[U])
=
\mathbb E[U]\Bigl(1+\sqrt{-2\log \mathbb E[U]}\Bigr).
\]
Multiplying by $m_f/\sqrt{2\pi}$ and using
\[
\mathbb E[U]=\frac{\sqrt{2\pi}\,a_f}{m_f},
\]
we obtain
\[
\int_{\mathbb R}(1+|t|)\varphi(t)\,d\mu_f(t)
\le
a_f\Bigl(1+\sqrt{-2\log\frac{\sqrt{2\pi}\,a_f}{m_f}}\Bigr)
=
a_f\Bigl(1+\sqrt{2\log\frac{m_f}{\sqrt{2\pi}\,a_f}}\Bigr).
\]
Since $m_f\le 1$, this implies
\[
\int_{\mathbb R}(1+|t|)\varphi(t)\,d\mu_f(t)
\le
a_f\Bigl(1+\sqrt{2\log\frac{1}{\sqrt{2\pi}\,a_f}}\Bigr)
=
a_f\Bigl(1+\sqrt{\log\frac{1}{2\pi a_f^2}}\Bigr).
\]

Finally, since
\[
0<a_f\le \frac{m_f}{\sqrt{2\pi}}\le \frac{1}{\sqrt{2\pi}},
\]
we have $\log(1/(2\pi a_f^2))\ge 0$, and hence
\[
1+\sqrt{\log\frac{1}{2\pi a_f^2}}
\le
2\sqrt{1+\log\frac{1}{2\pi a_f^2}}
=
2\sqrt{\log\frac{e}{2\pi a_f^2}}
\le
2\sqrt{\log\frac{e}{a_f^2}}.
\]
Thus the final estimate holds with, for instance, $C=2$.
\end{proof}

\begin{proof}[Proof of Theorem \ref{Theo_3}]
    Let $(Z_1, Z_2)$ be a centered normal pair with $\mathrm{Var}(Z_i)=1$ and  $\mathrm{Cov}(Z_1, Z_2) = \rho$. By the definitions of $\mu_f$ and $\mu_g$, we have
\[ f(x) = \int \textbf{1}_{\{x > t\}} \, d\mu_f(t), \quad g(y) = \int \textbf{1}_{\{y > s\}} \, d\mu_g(s). \]
     Then by Fubini's theorem, we have
\begin{equation}\label{CORO_2}
    \mathrm{Cov}_{\gamma}(F, G)=\mathrm{Cov}(f(Z_1), g(Z_2)) = \iint \mathrm{Cov}(\textbf{1}_{\{Z_1 > t\}}, \textbf{1}_{\{Z_2 > s\}}) \, d\mu_f(t) \, d\mu_g(s).
\end{equation} 
 Note that $\textbf{1}_{\{z > t\}} = \frac{1}{2}(1 + \mathrm{sgn}(z - t))$, hence Lemma \ref{Lem_proof2_2} yields
\[ \text{Cov}(1_{\{Z_1 > t\}}, 1_{\{Z_2 > s\}}) = \frac{1}{4} \text{Cov}(\text{sgn}(Z_1 - t), \text{sgn}(Z_2 - s)) \geq c_1 \cdot \frac{\rho \varphi(t) \varphi(s)}{(1 + |t|)(1 + |s|)}. \]
Substituting into \eqref{CORO_2}, we get
\[ \text{Cov}(f(Z_1), g(Z_2)) \geq c_1 \rho \left( \int \frac{\varphi(t)}{1 + |t|} d\mu_f(t) \right) \left( \int \frac{\varphi(s)}{1 + |s|} d\mu_g(s) \right)=:c_1\rho A_f A_g. \]

Recall
$$
a_f=\mathbb{E}[f(Z_1)Z_1]=\int \mathbb{E}[Z_1 \textbf{1}_{\{Z_1 > t\}}]d\mu_f(t) = \int \varphi(t) \, d\mu_f(t).
$$
Then by the Cauchy-Schwarz inequality, we have
$$
a_f^2 = \left( \int \varphi(t) \, d\mu_f(t) \right)^2 \leq \left( \int \frac{\varphi(t)}{1 + |t|} d\mu_f(t) \right) \left( \int (1 + |t|) \varphi(t) \, d\mu_f(t) \right) =: A_f B_f,
$$
So Lemma \ref{Lem_theo3_2} yields that
\[ A_f \geq \frac{a_f^2}{B_f}\ge \frac{1}{C} \frac{a_f}{\sqrt{\log(e/a_f^2)}}. \]
Similarly, we have
\[ A_g \geq \frac{a_g^2}{B_g}\ge \frac{a_g}{\sqrt{\log(e/a_g^2)}}, \]
where
\begin{align}
    a_g:=\mathbb{E}[g(Z_1)Z_1] = \int \varphi(t) \, d\mu_g(t).
\end{align}
We conclude the proof by Lemma \ref{Lem_Theorem3_1}.
\end{proof}

\appendix
\section{Supplementary Proofs for Corollaries of Talagrand’s and KKM’s Results } \label{apendix_supplement}

In this section we establish the special-case corollaries of the main Talagrand and KKM theorems that were announced in the Introduction.

Let $\mathcal{A}\subset \Omega_n$ be an increasing, regular and balanced family and  $\mathcal{B}=\{x\in \Omega_n: \sum_{i=1}^{n}x_i> n/2\}$. Recall that the \emph{influence} of the $k$-th coordinate of $\mathcal{A}$ is
\[
I_k(\mathcal{A})=2\mu\bigl(\{x\in\mathcal{A}:x\oplus e_k\notin\mathcal{A}\}\bigr),
\]
where $x\oplus e_k$ is obtained from $x$ by flipping the $k$-th coordinate.  
The \emph{total influence} of $\mathcal{A}$ is
\[
I(\mathcal{A})=\sum_{k=1}^n I_k(\mathcal{A}).
\]
Before giving our proof, we first quote the following lower bound on influences.

\begin{lemma}[Theorem 3.1 in \cite{KKL1988}]\label{Lem_appendix}
    For any family $\mathcal{A}\subset \Omega_n$, we have
    \begin{align}
        \max_{1\le i\le n}I_{i}(\mathcal{A})\ge c\frac{\log n}{n},
    \end{align}
    where $c>0$ is a universal constant.
\end{lemma}

Applying Lemma \ref{Lem_appendix} to the increasing, regular and balanced family $\mathcal{A}$, we have
\begin{align}
    I(\mathcal{A})\ge c\log n.
\end{align}
We next consider the influence of
\[
\mathcal{B}=\bigl\{x\in\Omega_{n}:\sum_{i=1}^{n}x_{i}>n/2\bigr\}.
\]
By symmetry, it suffices to compute $I_{1}(\mathcal{B})$. Let $x\in \Omega_n$ such that 
\begin{align}
    \sum_{i=1}^nx_i>\frac{n}{2},\quad (1-x_1)+\sum_{i=2}^{n}x_{i}\le \frac{n}{2}.
\end{align}
Hence, we have $x_1=1$. Then, we have
\begin{align}
    \frac{n}{2}-1<\sum_{i=2}^{n}x_i\le \frac{n}{2},
\end{align}
which yields that 
\begin{align}
    I_1(\mathcal{B})&=2\mu\big(x\in\Omega_n: x_1=1,  \frac{n}{2}-1<\sum_{i=2}^{n}x_i\le \frac{n}{2} \big)\nonumber\\
    &=2\frac{\binom{n-1}{\lfloor n/2\rfloor}}{2^n}=c_1\frac{1}{\sqrt{n}},
\end{align}
where $c_1>0$ is a universal constant and the last equality is due to the Stirling formula. 

For the families $\mathcal{A}, \mathcal{B}$ introduced in this section, Talagrand's result yields that
\begin{align}
    \mathrm{Cov}_\mu(\mathbf{1}_{\mathcal{A}}, \mathbf{1}_{\mathcal{B}})\ge c_2\frac{I(\mathcal{A})I(\mathcal{B})}{n\log (en/I(\mathcal{A})I(\mathcal{B}))}\ge \frac{c_3}{n},
\end{align}
and KKM's result yields that
\begin{align}
    \mathrm{Cov}_\mu(\mathbf{1}_{\mathcal{A}}, \mathbf{1}_{\mathcal{B}})\ge c_4\frac{I(\mathcal{A})I(\mathcal{B})}{n\sqrt{\log (en/I^2(\mathcal{A}))}\sqrt{\log (en/I^2(\mathcal{B}))}}\ge \frac{c_5\sqrt{\log n}}{n}.
\end{align}
Here, $c_{2}, \cdots, c_5>0$ are universal constants.

\section{Gaussian Influence: An Interpretation}\label{appendix_2}

Let \(\gamma = \gamma_n\) denote the standard Gaussian measure on \(\mathbb{R}^n\).  
For a set \(\mathcal{A} \subseteq \mathbb{R}^n\), recall the \(k\)-th Gaussian influence of \(\mathcal{A}\) defined before
\[
I_{k}^{(\gamma)}(\mathcal{A}) = \mathbb{E}_{\gamma}\!\bigl[\mathbf{1}_{\mathcal{A}} \cdot \mathbf{x}_{k}\bigr],
\]
where \(\mathbf{x}_{k}\) is the \(k\)-th coordinate function.  
Although this expression is the direct Gaussian analogue of the discrete influence (see~\eqref{Eq_influence}), it may appear unmotivated at first glance. The purpose of this brief section is to provide an intuitive justification for this definition and to demonstrate that it captures the same notion of coordinate sensitivity as its discrete counterpart.

Let \(\gamma_1\) be the standard Gaussian measure on \(\mathbb{R}\), and let \(\varphi\) denote its density function. For a Borel-measurable set \(A \subset \mathbb{R}\), define
\[
\gamma_1^{+}(A) = \liminf_{r \to 0^+} \frac{\gamma_1(A + [-r, r]) - \gamma_1(A)}{r}.
\]
For any Borel-measurable set \(\mathcal{A} \subset \mathbb{R}^n\) and any \(x = (x_1, \dots, x_n) \in \mathbb{R}^n\), define the section of \(\mathcal{A}\) at \(x\) along the \(k\)-th coordinate as
\[
\mathcal{A}_{k}^x := \bigl\{y \in \mathbb{R} : (x_1, \dots, x_{k-1}, y, x_{k+1}, \dots, x_n) \in \mathcal{A}\bigr\}.
\]
Keller, Mossel, and Sen~\cite{KMS2014} proposed the following alternative definition of the Gaussian influence of the \(k\)-th coordinate on \(\mathcal{A}\):
\[
\hat{I}_{k}^{(\gamma)}(\mathcal{A}) = \mathbb{E}_{\gamma_{n-1}}\!\bigl[\gamma_{1}^{+}(\mathcal{A}_{k}^{x})\bigr],
\]
where \(\gamma_{n-1}\) is the standard Gaussian measure on \(\mathbb{R}^{n-1}\). This definition is more geometrically intuitive, as it quantifies the boundary measure of the sections \(\mathcal{A}_{k}^{x}\) averaged over the remaining coordinates.  
We will later show that these two definitions coincide when \(\mathcal{A}\) is an increasing set, i.e. if $x\le y$, then $\mathbf{1}_{\mathcal{A}}(x)\le \mathbf{1}_{\mathcal{A}}(y)$. 

On the one hand, we have for an increasing set $\mathcal{A}\subset \mathbb{R}^n$
\begin{align}
    \mathcal{A}_{k}^x=\{y\in\mathbb{R}: y\ge t_{k}(x) \},
\end{align}
where $t_k(x)=\inf\{y\in\mathbb{R}: (x_1, \cdots,y,\cdots,x_n)\in \mathcal{A} \}$. Hence, we have
\begin{align}
    \hat{I}_{k}^{(\gamma)}(\mathcal{A}) = \mathbb{E}_{\gamma_{n-1}}\!\bigl[\gamma_{1}^{+}(\mathcal{A}_{k}^{x})\bigr]=\mathbb{E}_{\gamma_{n-1}}\!\bigl[\varphi(t_k(x))\bigr].
\end{align}

On the other hand, we have by Fubini's theorem and partial integration
\begin{align}
    I_{k}^{(\gamma)}(\mathcal{A}) = \mathbb{E}_{\gamma}\!\bigl[\mathbf{1}_{\mathcal{A}} \cdot \mathbf{x}_{k}\bigr]&=\mathbb{E}_{\gamma_{n-1}}\!\bigl[\int_{t_k(x)}^{\infty}x_k\varphi(x_k)\, dx_k\bigr]\nonumber\\
    &=\mathbb{E}_{\gamma_{n-1}}\!\bigl[\varphi(t_k(x))\bigr]= \hat{I}_{k}^{(\gamma)}(\mathcal{A}).
\end{align}

\section{Covariance Formulas for Gaussian Random Variables}\label{appendix_3}
In this section we derive a covariance identity for Gaussian random variables. Although the calculation is elementary, the result is not immediately obvious. In particular, let $\xi$ and $\eta$ be two standard Gaussian random variables with correlation $\rho\in [0, 1]$. Then, we have for any $t, s\in \mathbb{R}$
\begin{align}\label{appendix_3_1}
    \mathrm{Cov}\big(\mathrm{sgn}(\xi-t), \mathrm{sgn}(\eta-s)\big)=4\Big(\mathbb{P}\big( \xi>t, \eta>s   \big)-\mathbb{P}\big( \xi>t \big)\cdot\mathbb{P}\big(  \eta>s\big)\Big).
\end{align}

Below we shall prove \eqref{appendix_3_1}.
Let  
\[
X=\operatorname{sgn}(\xi-t),\qquad Y=\operatorname{sgn}(\eta-s).
\]  
Then  
\[
\mathbb{E} X=1\cdot\mathbb{P}(\xi>t)+(-1)\cdot\mathbb{P}(\xi\le t)
         =\mathbb{P}(\xi>t)-\mathbb{P}(\xi\le t)
         =2\mathbb{P}(\xi>t)-1,
\]  
and similarly  
\[
\mathbb{E} Y=2\mathbb{P}(\eta>s)-1.
\]  
Next observe the point-wise identity  
\[
X Y=\mathbf{1}_{\{\xi>t,\eta>s\}}+\mathbf{1}_{\{\xi\le t,\eta\le s\}}
    -\mathbf{1}_{\{\xi>t,\eta\le s\}}-\mathbf{1}_{\{\xi\le t,\eta>s\}}.
\]  
Taking expectations gives  
\[
\mathbb{E}[X Y]=\mathbb{P}(\xi>t,\eta>s)+\mathbb{P}(\xi\le t,\eta\le s)
               -\mathbb{P}(\xi>t,\eta\le s)-\mathbb{P}(\xi\le t,\eta>s).
\]  

Now compute the covariance:  
\[
\begin{aligned}
\operatorname{Cov}(X,Y)
&=\mathbb{E}[X Y]-\mathbb{E} X\,\mathbb{E} Y\\[4pt]
&=\Bigl[\mathbb{P}(\xi>t,\eta>s)+\mathbb{P}(\xi\le t,\eta\le s)
      -\mathbb{P}(\xi>t,\eta\le s)-\mathbb{P}(\xi\le t,\eta>s)\Bigr]\\[2pt]
&\quad -\bigl[2\mathbb{P}(\xi>t)-1\bigr]\bigl[2\mathbb{P}(\eta>s)-1\bigr].
\end{aligned}
\]  
Expand the product term:  
\[
[2\mathbb{P}(\xi>t)-1][2\mathbb{P}(\eta>s)-1]
=4\mathbb{P}(\xi>t)\mathbb{P}(\eta>s)
-2\mathbb{P}(\xi>t)
-2\mathbb{P}(\eta>s)
+1.
\]  
Use the marginal relations  
\[
\mathbb{P}(\xi\le t)=1-\mathbb{P}(\xi>t),\qquad
\mathbb{P}(\eta\le s)=1-\mathbb{P}(\eta>s)
\]  
to rewrite every “$\le$” probability in terms of “$>$” probabilities.  After substitution the first bracket becomes  
\[
\begin{aligned}
&\ \mathbb{P}(\xi>t,\eta>s)
  +\bigl[1-\mathbb{P}(\xi>t)-\mathbb{P}(\eta>s)+\mathbb{P}(\xi>t,\eta>s)\bigr]\\[2pt]
&-\bigl[\mathbb{P}(\xi>t)-\mathbb{P}(\xi>t,\eta>s)\bigr]
 -\bigl[\mathbb{P}(\eta>s)-\mathbb{P}(\xi>t,\eta>s)\bigr]\\[4pt]
&=\ 4\mathbb{P}(\xi>t,\eta>s)-2\mathbb{P}(\xi>t)-2\mathbb{P}(\eta>s)+1,
\end{aligned}
\]  
which is identical to the expanded product.  Hence their difference collapses to  
\[
\operatorname{Cov}(X,Y)=4\Bigl[\mathbb{P}(\xi>t,\eta>s)
                           -\mathbb{P}(\xi>t)\mathbb{P}(\eta>s)\Bigr].
\]  
This completes the proof. 
\section{Auxiliary results}
Define  
\[
h_r(t,s)=\frac{1}{\sqrt{1-r^{2}}}\exp\!\Bigl(\frac{rts-\tfrac12(t^{2}+s^{2})r^{2}}{1-r^{2}}\Bigr).
\]

\begin{lemma}\label{Lem_appendix_4}
Let $t\ge 1$, $k\ge 1$.  For every $r\in[0,\,1/(2tk)]$ we have
\[
h_{r}(t,-k)\ge e^{-1}.
\]
\end{lemma}

\begin{proof}
Fix $r\le 1/(2tk)$, then $r\le 1/2$ and $1-r^{2}\ge 3/4$, so
$\frac{1}{1-r^{2}}\le\frac{4}{3}$.  The exponent in $h_{r}(t,-k)$ equals
\[
\frac{-rtk-\tfrac12(t^{2}+k^{2})r^{2}}{1-r^{2}}
\ge-\frac{4}{3}\Bigl(rtk+\tfrac12(t^{2}+k^{2})r^{2}\Bigr).
\]
Since $r\le 1/(2tk)$ and $t,k\ge 1$,
\[
rtk\le\frac{1}{2},\qquad
(t^{2}+k^{2})r^{2}\le\frac{t^{2}+k^{2}}{4t^{2}k^{2}}\le\frac{1}{2},
\]
where the last bound uses $(t^{2}+k^{2})/(t^{2}k^{2})\le 2$.  Hence
\[
rtk+\tfrac12(t^{2}+k^{2})r^{2}\le\frac{1}{2}+\frac{1}{4}=\frac{3}{4},
\]
and therefore
\[
\frac{-rtk-\tfrac12(t^{2}+k^{2})r^{2}}{1-r^{2}}\ge-\frac{4}{3}\cdot\frac{3}{4}=-1.
\]
Together with $1/\sqrt{1-r^{2}}\ge 1$ this gives
\[
h_{r}(t,-k)\ge e^{-1}. \qedhere
\]
\end{proof}
\vskip 2mm
\Large\textbf{Acknowledgment:} The authors are grateful to Professor Wang Ke for her fruitful discussions.

\bibliography{conjecture_KKM}
\bibliographystyle{abbrv}

\end{document}